\pdfoutput=1
\documentclass[11pt]{article}

\usepackage[T1]{fontenc}
\usepackage[utf8]{inputenc}
\usepackage{amsmath,amssymb,amsthm,mathtools}
\usepackage{libertinus-type1}
\usepackage{libertinust1math}
\usepackage[margin=1in]{geometry}
\usepackage{xcolor}
\usepackage{enumitem}
\usepackage{booktabs}
\usepackage{microtype}
\usepackage[colorlinks=true,linkcolor=blue!50!black,citecolor=blue!50!black,urlcolor=blue!50!black]{hyperref}
\hypersetup{
  pdftitle={Chow classes of orbit closures of skew-symmetric matrix pencils},
  pdfauthor={Ari Krishna},
  pdfsubject={Algebraic geometry},
  pdfkeywords={skew-symmetric matrix pencil, Pfaffian cubic, orbit closure, Chow class, Schubert calculus, Jordan-Kronecker form}
}

\usepackage{aliascnt}
\newtheorem{theorem}{Theorem}[section]
\newaliascnt{proposition}{theorem}
\newtheorem{proposition}[proposition]{Proposition}
\aliascntresetthe{proposition}
\newaliascnt{corollary}{theorem}

\aliascntresetthe{corollary}
\newaliascnt{lemma}{theorem}
\newtheorem{lemma}[lemma]{Lemma}
\aliascntresetthe{lemma}
\theoremstyle{definition}
\newaliascnt{definition}{theorem}

\aliascntresetthe{definition}
\newaliascnt{remark}{theorem}

\aliascntresetthe{remark}
\newaliascnt{example}{theorem}

\aliascntresetthe{example}
\usepackage[nameinlink,noabbrev]{cleveref}
\crefname{theorem}{theorem}{theorems}
\Crefname{theorem}{Theorem}{Theorems}
\crefname{proposition}{proposition}{propositions}
\Crefname{proposition}{Proposition}{Propositions}
\crefname{corollary}{corollary}{corollaries}
\Crefname{corollary}{Corollary}{Corollaries}
\crefname{lemma}{lemma}{lemmas}
\Crefname{lemma}{Lemma}{Lemmas}
\crefname{definition}{definition}{definitions}
\Crefname{definition}{Definition}{Definitions}
\crefname{remark}{remark}{remarks}
\Crefname{remark}{Remark}{Remarks}
\crefname{example}{example}{examples}
\Crefname{example}{Example}{Examples}

\newcommand{\Z}{\mathbb Z}
\newcommand{\Pj}{\mathbb P}
\newcommand{\Gr}{\operatorname{Gr}}
\newcommand{\CH}{\operatorname{CH}}
\newcommand{\codim}{\operatorname{codim}}
\newcommand{\rk}{\operatorname{rk}}
\newcommand{\Pf}{\operatorname{Pf}}
\newcommand{\Sym}{\operatorname{Sym}}
\newcommand{\Span}{\operatorname{Span}}
\newcommand{\cO}{\mathcal O}
\newcommand{\cS}{\mathcal S}
\newcommand{\cQ}{\mathcal Q}
\newcommand{\cE}{\mathcal E}
\newcommand{\cK}{\mathcal K}
\newcommand{\cA}{\mathcal A}
\newcommand{\cR}{\mathcal R}
\newcommand{\cD}{\mathcal D}
\newcommand{\cF}{\mathcal F}

\newcommand{\cT}{\mathcal T}

\newcommand{\typeaaa}{\ensuremath{\mathbf{111}}}
\newcommand{\typetwoone}{\ensuremath{\mathbf{21}}}
\newcommand{\typethree}{\ensuremath{\mathbf{3}}}
\newcommand{\typeonetwo}{\ensuremath{\mathbf{(11)1}}}
\newcommand{\typetwopart}{\ensuremath{\mathbf{(21)}}}

\title{Chow classes of orbit closures of skew-symmetric matrix pencils}
\author{Ari Krishna}

\begin{document}
\maketitle

\begin{abstract}
Let $V$ be a six-dimensional complex vector space and let
\[
G=\Gr(2,\Lambda^2V^\vee)
\]
parametrize pencils of skew-symmetric $6\times6$ matrices.  The group $\operatorname{PGL}(V)$ has $11$ orbits on $G$, classified by the Jordan--Kronecker canonical form.  We compute the Chow class of every orbit closure in the Schubert basis of $\CH^*(G)$. 
\end{abstract}

\medskip
\noindent\textbf{2020 Mathematics Subject Classification.} Primary 14C17; Secondary 14L30, 14M15, 15A22.

\noindent\textbf{Keywords.} Skew-symmetric matrix pencil; Pfaffian cubic; orbit closure; Chow class; Schubert calculus; Jordan--Kronecker canonical form; Gysin formula.

\section{Introduction}

Let $V$ be a complex vector space of dimension six and put
\[
W=\Lambda^2V^\vee,\qquad G=\Gr(2,W).
\]
A point $L\in G$ is an unframed pencil of alternating forms on $V$.  Choosing a basis $A,B$ of $L$ produces a skew-symmetric matrix pencil
\[
A-\lambda B,
\]
well-defined up to simultaneous congruence by $\operatorname{GL}(V)$ and change of basis in $L$.  The congruence classification of skew-symmetric matrix pencils is classical and is expressed in terms of Jordan--Kronecker blocks.  The dimensions and closure relations of the corresponding affine congruence orbits have been studied systematically by Dmytryshyn, K\aa gstr\"om, Sergeichuk, and Das \cite{DKS13,DK14,DD26}.  Passing to unframed pencils identifies eigenvalue configurations up to $\operatorname{PGL}_2$.  In size six, there are at most three eigenvalues, and consequently the induced action
\[
\operatorname{PGL}(V)\curvearrowright G
\]
has finitely many orbits.

The projective geometry is governed by the Pfaffian cubic hypersurface
\[
\Pf(V)=\{[\omega]\in\Pj(W):\rk(\omega)\leq4\}.
\]
Its singular locus is the Pl\"ucker Grassmannian
\[
X=\Gr(2,V^\vee)=\{[\omega]:\rk(\omega)\leq2\}\subset\Pj(W).
\]
For a pencil $L$, restriction of the Pfaffian gives a binary cubic
\[
\operatorname{pf}_L\in\Sym^3L^\vee\otimes\det(V^\vee).
\]
The root multiplicities of this cubic describe the regular Jordan data coarsely, while incidence with $X$ and the common kernel
\[
\ker L:=\bigcap_{\omega\in L}\ker(\omega)
\]
recover the finer Jordan--Kronecker type.

Our purpose is to compute all eleven fundamental classes in
\[
\CH^*(G)=\bigoplus_{13\geq a\geq b\geq0}\Z\sigma_{a,b},
\qquad
\sigma_{a,b}=s_{a,b}(\cS^\vee),
\]
where $\cS$ is the tautological rank-two bundle on $G$.  The main result is the following table.  The orbit labels and canonical forms are recalled in \cref{sec:orbits}.

\begin{theorem}\label{thm:main}
The eleven $\operatorname{PGL}(V)$-orbit closures in $G=\Gr(2,\Lambda^2V^\vee)$ have the following Chow classes.
\[
\begin{array}{ccl}
\hline
\text{type} & \codim_G & [\overline{\cO}]\\
\hline
\typeaaa &0&1\\
\typetwoone &1&6\sigma_1\\
\typethree &2&6\sigma_2+9\sigma_{1,1}\\
M_1\oplus M_1 &4&18\sigma_{3,1}+27\sigma_{2,2}\\
\typeonetwo &5&14\sigma_5\\
M_2\oplus M_0 &5&12\sigma_{4,1}+30\sigma_{3,2}\\
\typetwopart &6&14\sigma_6+28\sigma_{5,1}\\
M_1\oplus M_0\oplus H_1 &7&42\sigma_{6,1}+84\sigma_{5,2}\\
M_0^2\oplus H_1\oplus H_1 &10&3\sigma_{9,1}+15\sigma_{8,2}+40\sigma_{7,3}+70\sigma_{6,4}+91\sigma_{5,5}\\
M_0^2\oplus H_2 &11&6\sigma_{10,1}+24\sigma_{9,2}+50\sigma_{8,3}+60\sigma_{7,4}+42\sigma_{6,5}\\
M_1\oplus M_0^3 &15&56\sigma_{9,6}+70\sigma_{8,7}.\\
\hline
\end{array}
\]
\end{theorem}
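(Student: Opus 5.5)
The proof goes orbit by orbit. For each closure $\overline{\cO}$ we will find a resolution, or at least a birational model, as the zero locus of a section of a vector bundle, or as the image of a projective bundle over a partial flag variety, whose Chow ring we can push forward to $G$. The classes are then computed by Gysin formulas, that is, Schubert calculus on the tautological bundles, and each answer is checked against two independent invariants: the degree in the Plücker embedding, and a dimension count.

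The divisor and codimension-two cases come from the discriminant of the binary cubic. The section $\operatorname{pf}_L$ of $\Sym^3\cS^\vee\otimes\det V^\vee$ has a discriminant, which is a section of a line bundle of weight $4$ in $\cS^\vee$, and its vanishing locus is $\overline{\cO}_{\typetwoone}$. The discriminant of a binary cubic has degree $4$ in the coefficients, which are linear in $\sigma$-weights, giving $c_1=4\cdot c_1(\Sym^3\cS^\vee)/\!\ldots$; concretely $6\sigma_1$, checked as $4\cdot\tfrac{3}{2}$ weights. Similarly $\overline{\cO}_{\typethree}$ is the locus where the cubic is a perfect cube. Its class is the pullback of the class of the twisted-cubic cone in $\Sym^3$, which can be computed as the locus $\{(L,\ell)\colon \operatorname{pf}_L\in \ell^3\}$ inside $\Pj(\cS)$. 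There it is the zero locus of a section of the rank-3 quotient $\Sym^3\cS^\vee/\ell^{-3}$ (twisted appropriately), and pushing forward $c_3$ gives $6\sigma_2+9\sigma_{1,1}$.

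The remaining orbits are defined by incidence with $X$ and by the common kernel $\ker L$. For the deep orbits we will use explicit incidence resolutions.
\begin{itemize}
\item $M_1\oplus M_0^3$: $\ker L$ has dimension $\geq 3$. We resolve it over $\Gr(3,V)$ via pencils inside $\Lambda^2(V/K)^\vee\oplus(\ldots)$. More precisely, the condition $\ker L\supseteq K$ says that $L\subset \Lambda^2(V/K)^\vee$, a subbundle of rank 3. Then $\overline{\cO}=\{L\subset\Lambda^2 (V/K)^\vee\}$, the Grassmannian bundle $\Gr(2,\Lambda^2 Q^\vee)$ over $\Gr(3,V)$, and its class is obtained by pushing forward the top Chern class $c_{top}(\cS^\vee\otimes W/E)$. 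The same pattern applies to $M_0^2\oplus H_2$ and $M_0^2\oplus H_1^2$, which have a two-dimensional common kernel $K$, so that $L\subset \Lambda^2(V/K)^\vee$ with rank 6. The generic such pencil on $V/K$ (dim 4) is $H_1\oplus H_1$, and its discriminant divisor in $\Gr(2,6)$ over $\Gr(2,V)$ gives $H_2$.
\item Pencils meeting $X$: here we use $\Pj(\cS)\times_{\Pj(W)}X$. For $\typeonetwo$ and $\typetwopart$ the condition is that a point of $L$ lies in $X$, possibly with tangency conditions. The incidence variety $\{(L,[\omega])\colon \omega\in L\cap X\}$ is a $\Gr(1,W/\omega)$-bundle over $X$, and its pushforward gives $14\sigma_5$; we check that $14=\deg X$ is consistent.
\item $M_1\oplus M_1$, $M_2\oplus M_0$ and $M_1\oplus M_0\oplus H_1$: these are degenerate pencils in which every member has rank $\leq4$. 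They are handled via a kernel map $\Pj^1\to\Pj(V)$ of degree 1 or 2, that is, by the kernel line/conic data. For example, $M_2\oplus M_0$ requires $\ker L\neq0$, so we resolve over $\Pj(V)$ by $L\subset\Lambda^2(V/\ell)^\vee$, which has rank 10. The codimension is $13-8$: $\dim \Gr(2,10)=16$ plus $5$ gives $21$, against $\dim G=26$, so the codimension is $5$, consistent. The resolution here is not birational, because generic pencils in it have a one-dimensional kernel; we must identify the generic type and divide by the degree.
\end{itemize}

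The main obstacle is making sure that each resolution maps birationally onto the correct orbit closure, or else computing the degree of the map. The delicate cases are $M_1\oplus M_1$, which is characterized by the pencil consisting of rank-$\leq4$ forms but with $\ker L=0$, and the tangency conditions defining $\typetwopart$. For $M_1\oplus M_1$ we would first try the model: a pencil with $\operatorname{pf}_L\equiv0$ and a varying kernel line tracing a line in $\Pj(V)$. We parametrize this by flags (line $\Pj^1\subset\Pj V$, isomorphism data) and push forward. Finally, all classes would be cross-checked by pairing with $\sigma_1^{\dim}$ (degrees) and by restriction to test curves.
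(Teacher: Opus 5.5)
\textbf{Verdict: the proposal has genuine gaps.} Its overall strategy matches the paper's: build incidence resolutions and push them forward by Gysin maps. Several cases are sound.
The discriminant of $\operatorname{pf}_L$ is, up to a constant twist, a section of $(\det\cS^\vee)^{6}$. This gives $6\sigma_1$ by a route equivalent to the paper's marked double root.
Your perfect-cube locus on $\Pj(\cS)$ is exactly the paper's marked triple-root computation, and $14\sigma_5$ is the paper's incidence class.
Cutting out the incidence $\{(K,L):L\subset\Lambda^2(V/K)^\vee\}$ in $\Gr(k,V)\times G$ as the zero locus of $\cS\to W/\Lambda^2(V/K)^\vee$ is equivalent to the paper's Gysin computation on $Y_k$.

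The main gap is $M_1\oplus M_1$, where your tentative model does not work as stated.
Every member of a general pencil of this type has rank four, so its kernel is two-dimensional and the kernel bundle over the pencil line is $\cO(-1)^{\oplus2}$. The projective kernels sweep a quadric surface in a $\Pj^3$. Your ``kernel lines tracing a line in $\Pj(V)$'' are the sub-line bundles $\cO(-1)\subset\cO(-1)^{\oplus2}$, and these form a $\Pj^1$. So any model that records such a choice has positive-dimensional fibres over the orbit, and its fundamental class pushes forward to zero.
The missing idea is that the closure is simply $\cF=\{L:\operatorname{pf}_L\equiv0\}$, the zero scheme of the section $\operatorname{pf}$ of the rank-four bundle $\Sym^3\cS^\vee$. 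Beauville's theorem (irreducible, of the expected codimension four) makes this a regular section, so $[\cF]=c_4(\Sym^3\cS^\vee)=9xy(2x+y)(x+2y)=18\sigma_{3,1}+27\sigma_{2,2}$. Its general point has $\ker L=0$ because $\codim C_1=5$.

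Other steps are wrong or missing.
\begin{itemize}
\item For $M_2\oplus M_0$ you claim $Y_1\to C_1$ is not birational because the general pencil has a one-dimensional kernel. This is backwards: a one-dimensional common kernel means the fibre over $L$ is the single point $\ker L$. The map is therefore birational, and no degree should be divided out.
\item For $\typetwopart$ the ``tangency conditions'' must be made precise. For $q\in X$, $\Pf(q+ts)$ is proportional to $3t^2\,q\wedge s\wedge s+t^3\,s\wedge s\wedge s$. So every line through $X$ already has a double root at $q$, and the root is triple iff $s|_{\ker q}$ is degenerate. On $Z=\Pj((W\otimes\cO_X)/\cR)$ this is the zero divisor of a section of $(\cD^\vee)^{2}\otimes\det(\ker q)^\vee$, of class $2\xi+c_1(\cR^\vee)$. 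Its pushforward is $14\sigma_6+28\sigma_{5,1}$.
\item For $M_1\oplus M_0\oplus H_1$ your kernel line/conic heuristic gives no parameter space. The paper uses $\Pj_H((\Lambda^2\cE^\vee)/\cR)$ over $H=\Gr_{\Pj(V)}(2,\cE^\vee)$. This records the common kernel line, the rank-two member of the quotient pencil, and a second direction.
\item Beyond the first few classes you carry out no pushforward. You also do not identify the class $2c_1(\cT^\vee)+2c_1(\det\cE_2^\vee)$ of the discriminant divisor on $Y_2$. So the numerical content of the table is not established; the paper obtains it from the Schur Gysin formula and the coefficient-extraction formula on $\Gr(k,6)$. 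Pl\"ucker degrees and test curves are consistency checks, not a derivation.
\end{itemize}
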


Several entries admit short intrinsic proofs.  The repeated-root divisor and the triple-root locus are obtained from relative jets of the universal Pfaffian binary cubic.  The identically singular locus is the zero scheme of the universal section of $\Sym^3\cS^\vee$; Beauville proved that this Fano scheme of lines on the Pfaffian cubic has the expected codimension and is irreducible \cite{Bea05}.  The class $14\sigma_5$ is the incidence class of lines meeting the rank-two Grassmannian $X$, whose Pl\"ucker degree is fourteen.

The remaining classes are obtained from smooth incidence spaces.  Common-kernel strata are dominated birationally by relative Grassmannians
\[
\Gr_{\Gr(k,V)}\bigl(2,\Lambda^2(V/\cK)^\vee\bigr),
\qquad k=1,2,3,
\]
where $\cK$ is the tautological bundle. The two refinements are imposed by a marked rank-two member or by the discriminant of the quotient Pfaffian.  Their Schubert coefficients are computed by iterated Gysin maps. The resulting numbers are Chern numbers of exterior-square bundles on $\Gr(k,6)$, and we evaluate them by a uniform coefficient-extraction formula.  

\section{Jordan--Kronecker types}\label{sec:orbits}

We recall the canonical blocks in the form needed here. Let $J_h(\mu)$ be the $h\times h$ Jordan block with eigenvalue $\mu$, and put
\[
H_h(\mu)=
\begin{bmatrix}0&J_h(\mu)\\-J_h(\mu)^t&0\end{bmatrix}
-\lambda
\begin{bmatrix}0&I_h\\-I_h&0\end{bmatrix}.
\]
The analogous block with eigenvalue at infinity will be denoted $K_h$.  Let
\[
F_m=[I_m\;0],\qquad G_m=[0\;I_m]
\]
and define the singular block of size $2m+1$ by
\[
M_m=
\begin{bmatrix}0&F_m\\-F_m^t&0\end{bmatrix}
-\lambda
\begin{bmatrix}0&G_m\\-G_m^t&0\end{bmatrix}.
\]
In particular, $M_0$ is the one-dimensional zero block.  Every skew-symmetric pencil is congruent to a direct sum of these blocks, uniquely up to the usual permutation and eigenvalue data \cite{DKS13}.

A change of basis in the two-dimensional pencil acts on the eigenvalues by $\operatorname{PGL}_2$.  Since a six-dimensional regular alternating pencil has Pfaffian degree three, any three distinct eigenvalues may be normalized to $0,1,\infty$.  The following finite list is immediate from the canonical form.

\begin{proposition}\label{prop:orbits}
The action of $\operatorname{PGL}(V)$ on $G$ has eleven orbits.  Representatives and geometric descriptions are as follows.
\[
\begin{array}{cll}
\hline
\text{label}&\text{canonical type}&\text{generic geometric description}\\
\hline
\typeaaa&H_1(0)\oplus H_1(1)\oplus K_1&\operatorname{pf}_L\text{ squarefree}\\
\typetwoone&H_2(0)\oplus K_1&\text{one double root at a rank-four form}\\
\typeonetwo&H_1(0)^2\oplus K_1&\text{one rank-two member and one simple root}\\
\typethree&H_3(0)&\text{a triple root at a rank-four form}\\
\typetwopart&H_2(0)\oplus H_1(0)&\text{a triple root at a rank-two form}\\
M_1\oplus M_1&M_1\oplus M_1&\operatorname{pf}_L\equiv0,\ \ker L=0\\
M_2\oplus M_0&M_2\oplus M_0&\dim\ker L=1,\text{ constant quotient rank }4\\
M_1\oplus M_0\oplus H_1&M_1\oplus M_0\oplus H_1(0)&\dim\ker L=1,\text{ quotient has a rank-two member}\\
M_0^2\oplus H_1\oplus H_1&M_0^2\oplus H_1(0)\oplus K_1&\dim\ker L=2,\text{ quotient Pfaffian squarefree}\\
M_0^2\oplus H_2&M_0^2\oplus H_2(0)&\dim\ker L=2,\text{ quotient Pfaffian has a double root}\\
M_1\oplus M_0^3&M_1\oplus M_0^3&\dim\ker L=3.\\
\hline
\end{array}
\]
Their codimensions are those displayed in \cref{thm:main}.
\end{proposition}

\begin{proof}
For regular pencils the sum of the sizes of the $H$- and $K$-blocks is six, hence the corresponding block sizes form a partition of three.  The decompositions with linearly independent coefficient matrices are exactly the five regular types listed above.  The nominal type $H_1(0)^3$ has proportional coefficient matrices and therefore does not define a point of $G$.

For singular pencils, the number of odd-dimensional $M$-blocks is even.  Enumerating direct sums of total size six gives
\[
M_1^2,\quad M_2M_0,\quad M_1M_0H_1,\quad
M_0^2H_2,\quad M_0^2H_1H_1,\quad M_1M_0^3,
\]
after excluding the types for which the two coefficient matrices are proportional.  The geometric descriptions follow directly from the kernels and elementary divisors of the blocks.  The codimensions may be read from the incidence models constructed below; they also agree with the congruence-orbit formulas of \cite{DKS13} after quotienting by changes of basis in the pencil.
\end{proof}

\section{Schubert conventions and the Pfaffian cubic}

Let
\[
0\longrightarrow\cS\longrightarrow W\otimes\cO_G\longrightarrow\cQ\longrightarrow0
\]
be the tautological sequence on $G=\Gr(2,W)$.  Put
\[
a=c_1(\cS^\vee),\qquad b=c_2(\cS^\vee).
\]
If $x,y$ are the Chern roots of $\cS^\vee$, our Schubert convention is
\[
\sigma_{p,q}=s_{p,q}(x,y)=(xy)^q\sum_{j=0}^{p-q}x^{p-q-j}y^j,
\qquad13\geq p\geq q\geq0.
\]
The Poincar\'e dual of $\sigma_{p,q}$ is
\[
\sigma_{13-q,13-p}.
\]

The Pfaffian is a cubic polynomial on $W=\Lambda^2V^\vee$ with values in the fixed line $\det V^\vee$.  Restriction to the universal pencil gives a section
\[
\operatorname{pf}\in H^0(G,\Sym^3\cS^\vee)\otimes\det V^\vee.
\]
We suppress the fixed determinant line in ordinary Chow calculations.

\subsection{Gysin formulas}\label{sec:gysin}

We collect the pushforward identities that will be used repeatedly.  Our projective bundles parametrize one-dimensional subspaces.  Thus if
\[
p:\Pj(\cF)\longrightarrow B
\]
is the projective bundle of lines in a rank-$N$ bundle $\cF$, if $\cD\subset p^*\cF$ is the tautological line, and if
\[
\xi=c_1(\cD^\vee),
\]
then
\begin{equation}\label{eq:proj-gysin}
p_*(\xi^{N-1+j})=s_j(\cF^\vee),\qquad j\geq0.
\end{equation}
Here and below $s_j$ denotes the one-row Schur class, equivalently the complete symmetric polynomial in the Chern roots.  Formula \eqref{eq:proj-gysin} follows from the projective-bundle relation and is compatible with the convention $p_*(\xi^{N-1})=1$.

We also use the following Grassmann-bundle form of the Schur Gysin formula.  Let $\cF$ have rank $N$, let
\[
p:\Gr_B(2,\cF)\longrightarrow B
\]
parametrize rank-two subbundles, and let $\cT$ be its tautological bundle.

\begin{lemma}[Schur Gysin formula]\label{lem:schur-gysin}
For $u\geq v\geq0$,
\[
p_*s_{u,v}(\cT^\vee)=
\begin{cases}
s_{u-N+2,\,v-N+2}(\cF^\vee),&v\geq N-2,\\
0,&v<N-2,
\end{cases}
\]
where a zero part is omitted.
\end{lemma}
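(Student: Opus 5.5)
The plan is to prove the formula fibrewise, using the Jacobi--Trudi/Bott description of pushforward from a Grassmann bundle, and reduce to a symmetric-function identity. Let $x_1,\dots,x_N$ be the Chern roots of $\cF^\vee$. Over the Grassmann bundle, $\cT^\vee$ has roots $t_1,t_2$, and the quotient $\cF/\cT$ has the remaining roots. The standard localization (or iterated projective bundle) formula for $p:\Gr_B(2,\cF)\to B$ gives
\[
p_*f(t_1,t_2)=\sum_{\{i<j\}}\frac{f(x_i,x_j)}{\prod_{k\neq i,j}(x_k-x_i)(x_k-x_j)}
\]
for symmetric $f$, with signs fixed by the convention $p_*(\xi^{N-1})=1$ in \eqref{eq:proj-gysin}. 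Rather than verify signs from scratch, I will derive the statement by factoring $p$ through the flag bundle $\operatorname{Fl}(1,2;\cF)\to\Gr_B(2,\cF)$ and iterating \eqref{eq:proj-gysin}.

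\textbf{Key steps.}
\begin{enumerate}[label=(\arabic*)]
\item Write $q:\Pj(\cT)\to \Gr_B(2,\cF)$ with tautological line $\cD$ and $\xi=c_1(\cD^\vee)$, so that $\cT^\vee$ has roots $\xi,\eta$ with $\eta=c_1((\cT/\cD)^\vee)$.
\item Use the determinantal identity $s_{u,v}(\xi,\eta)=q_*\bigl(\xi^{u+1}\eta^{v}\bigr)$. This is the rank-two case of \eqref{eq:proj-gysin}, i.e.\ divided differences: $q_*(\xi^{u+1}\eta^v)=(\xi^{u+1}\eta^v-\eta^{u+1}\xi^v)/(\xi-\eta)=s_{u,v}$.
\item Observe that $p\circ q$ equals the composite $\Pj(\cF)\xleftarrow{r}\Pj_{\Pj(\cF)}(\cF/\cD)$. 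Along this composite, $\xi$ is the hyperplane class of $\Pj(\cF)$ and $\eta$ is the hyperplane class of the rank-$(N-1)$ bundle $\cF/\cD$.
\item Push forward along $r$ using \eqref{eq:proj-gysin}: $r_*\eta^{v}=s_{v-N+2}((\cF/\cD)^\vee)$, which vanishes when $v<N-2$. This already yields the vanishing case.
\item When $v\ge N-2$, set $m=v-N+2$ and expand
\[
s_m((\cF/\cD)^\vee)=\sum_{i=0}^{m}(-1)^i\,\xi^{i}\cdots,
\]
using $s(\cF^\vee)=s((\cF/\cD)^\vee)\,s(\cD^\vee)$. This gives $s_m((\cF/\cD)^\vee)=s_m(\cF^\vee)-\xi\, s_{m-1}(\cF^\vee)$.
\item Push $\xi^{u+1}$ times this down to $B$ via \eqref{eq:proj-gysin}. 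The result is
\[
s_{u-N+2}\,s_m-s_{u-N+3}\,s_{m-1}
\]
evaluated on $\cF^\vee$, which is the two-row Jacobi--Trudi determinant $s_{u-N+2,\,m}(\cF^\vee)$. When $m=0$ the second term vanishes, matching the convention of omitting a zero part.
\end{enumerate}

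\textbf{Main obstacle.} The delicate points are the bookkeeping of signs and duals in step (3): whether the second root is a class of $\cF/\cD$ or of its dual, and checking that the Chern-class relation in step (5) has the correct sign. I would first sanity-check the whole chain on $B=\mathrm{pt}$ with $N=2$ (where $p=\mathrm{id}$) and with $N=3$, where $\Gr(2,3)=\Pj^{2\vee}$ and $p_*\sigma_{1,1}=1$.
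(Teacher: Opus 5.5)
Your argument is correct, but it takes a different route from the paper. The paper proves the lemma essentially by citation. It identifies $s_{\lambda+(N-2,N-2)}(\cT^\vee)$ with the relative Schubert classes dual to $s_\lambda(\cF^\vee)$ and invokes the general Gysin formula for Schur classes on Grassmann bundles \cite[Theorem~1.1]{DP18}. It adds only the remark that $(u,v)$ fails to contain the fibrewise rectangle when $v<N-2$.

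You instead factor $p$ through $\operatorname{Fl}(1,2;\cF)=\Pj_{\Pj(\cF)}(\cF/\cD)$ and use only \eqref{eq:proj-gysin}, applied twice, and the two-row Jacobi--Trudi identity \eqref{eq:JT-two-row}, both already in the paper. Every step checks, and your sign worries are unfounded:
\begin{itemize}
\item Step (2) follows from the projection formula, because $\xi\eta=c_2(\cT^\vee)$ is pulled back from $\Gr_B(2,\cF)$. Hence $q_*(\xi^{u+1}\eta^v)=c_2(\cT^\vee)^v\,s_{u-v}(\cT^\vee)=s_{u,v}(\cT^\vee)$, exactly in the paper's convention.
\item In step (3), $\eta=c_1((\cT/\cD)^\vee)$ is precisely $c_1(\cO(1))$ for the bundle of lines in the rank-$(N-1)$ bundle $\cF/\cD$. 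So \eqref{eq:proj-gysin} applies verbatim, with no sign change.
\item Step (5) is the generating-function identity $\sum_m s_m((\cF/\cD)^\vee)t^m=(1-\xi t)\sum_m s_m(\cF^\vee)t^m$. Delete the stray alternating sum $\sum(-1)^i\xi^i\cdots$: that is the Chern-class version, not the complete-symmetric one.
\item The opening localization formula is never used and can be dropped.
\end{itemize}

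Your route gives a self-contained proof in the paper's own normalization. It also gives an honest proof of the vanishing case ($r_*\eta^v=0$ for $v<N-2$ by degree), which the paper takes from the cited theorem. The paper's route buys brevity and a statement valid for Grassmann bundles of any rank. Yours, as written, is specific to rank two, though it generalizes by iterating through the full flag bundle.
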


\begin{proof}
The relative dimension is $2(N-2)$.  The classes
\[
s_{\lambda+(N-2,N-2)}(\cT^\vee)
\]
are the relative Schubert classes dual to $s_\lambda(\cF^\vee)$ under the Grassmann-bundle pairing.  Equivalently, the formula is the rank-two case of the standard Gysin identity for Schur classes on a Grassmann bundle; see \cite[Theorem~1.1]{DP18}.  The vanishing for $v<N-2$ follows because the partition does not contain the fiberwise top rectangle $(N-2,N-2)$.
\end{proof}

For the final integrations on ordinary Grassmannians, we use an explicit coefficient formula.  Let $\cE$ be the universal quotient bundle of rank $r=6-k$ on $\Gr(k,6)$, and let $x_1,\ldots,x_r$ denote its formal Chern roots.  If $\Phi$ is symmetric and homogeneous of degree $kr$, then
\begin{equation}\label{eq:grass-coeff}
\int_{\Gr(k,6)}\Phi(\cE)
=
[x_1^5x_2^4\cdots x_r^{6-r}]
\left(
\Phi(x_1,\ldots,x_r)
\prod_{1\leq i<j\leq r}(x_i-x_j)
\right).
\end{equation}
Indeed, after expanding $\Phi$ in Schur polynomials, the alternant formula shows that the indicated coefficient is the coefficient of the top rectangular Schur class $s_{(k^r)}(\cE)$, whose integral is one.

In particular, for $\Lambda^2\cE$ we evaluate Schur classes in the alphabet
\[
\{x_i+x_j:1\leq i<j\leq r\}.
\]
The required finite calculations may be performed from
\begin{equation}\label{eq:complete-generating}
\sum_{m\geq0}h_m\bigl(\Lambda^2\cE\bigr)t^m
=
\prod_{i<j}\frac{1}{1-(x_i+x_j)t}
\end{equation}
and the two-row Jacobi--Trudi identity
\begin{equation}\label{eq:JT-two-row}
s_{a,b}=h_ah_b-h_{a+1}h_{b-1}.
\end{equation}
Equations \eqref{eq:grass-coeff}--\eqref{eq:JT-two-row} give a direct verification of every numerical table below.

\subsection{Repeated and triple roots}

Let
\[
\pi:\Pj(\cS)\longrightarrow G,
\qquad h=c_1\bigl(\cO_{\Pj(\cS)}(1)\bigr).
\]
The universal Pfaffian is a section of $\cO_{\Pj(\cS)}(3)$.  Since
\[
c_1(\Omega_\pi)=-2h+a,
\]
the marked multiplicity-$m$ locus has class
\[
\prod_{i=0}^{m-1}\bigl(3h+i(-2h+a)\bigr).
\]
We use
\[
\pi_*(h)=1,\qquad \pi_*(h^2)=a,\qquad \pi_*(h^3)=a^2-b.
\]

\begin{proposition}\label{prop:jets}
The closures of the types \typetwoone and \typethree have classes
\[
[\overline{\cO_{\typetwoone}}]=6\sigma_1,
\qquad
[\overline{\cO_{\typethree}}]=6\sigma_2+9\sigma_{1,1}.
\]
\end{proposition}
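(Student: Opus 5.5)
The plan is to realize each orbit closure as the pushforward of a marked locus on $\pi:\Pj(\cS)\to G$, computed as the zero scheme of a jet section of the universal Pfaffian. Concretely, the section $\operatorname{pf}\in H^0(\Pj(\cS),\cO(3))$ has relative $(m-1)$-jets giving a section of the principal parts bundle $P^{m-1}_\pi(\cO(3))$. This bundle is filtered with graded pieces $\cO(3)\otimes\Sym^i\Omega_\pi$, $0\le i\le m-1$. Since $\Omega_\pi$ is a line bundle with $c_1(\Omega_\pi)=-2h+a$, the top Chern class of $P^{m-1}_\pi(\cO(3))$ is the displayed product $\prod_{i=0}^{m-1}\bigl(3h+i(-2h+a)\bigr)$. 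Its zero scheme $Z_m$ is the set of pairs $(L,[\ell])$ such that $\ell$ is a root of multiplicity at least $m$ of $\operatorname{pf}_L$.

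Assuming for the moment that the zero schemes are as expected, the pushforwards are a direct calculation with the stated formulas $\pi_*(h)=1$, $\pi_*(h^2)=a$, $\pi_*(h^3)=a^2-b$.

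\textbf{Case $m=2$.} We get $3h(h+a)=3h^2+3ah$, which pushes forward to $3a+3a=6a=6\sigma_1$.

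\textbf{Case $m=3$.} We get
\[
3h(h+a)(2a-h)=-3h^3+3ah^2+6a^2h,
\]
which pushes forward to $-3(a^2-b)+3a^2+6a^2=6a^2+3b$. Using $a^2=\sigma_2+\sigma_{1,1}$ and $b=\sigma_{1,1}$, this equals $6\sigma_2+9\sigma_{1,1}$, as claimed.

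The geometric input needed to identify these pushforwards with $[\overline{\cO_{\typetwoone}}]$ and $[\overline{\cO_{\typethree}}]$ has three parts.

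First, I would describe the components of $Z_m$ of the expected dimension $\dim\Pj(\cS)-m$. Over the open set where $\operatorname{pf}_L\not\equiv0$, the section of the cubic is nonzero on each fiber. Hence $Z_m$ there consists of marked roots of multiplicity $\ge m$, and its image is the closure of the type \typetwoone\ locus (together with more degenerate types) for $m=2$, respectively the triple-root types for $m=3$.

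Second, I would handle the locus where $\operatorname{pf}_L\equiv0$. There the jet section vanishes on the whole fiber $\pi^{-1}(L)\cong\Pj^1$. This locus is Beauville's Fano scheme, of codimension $4$ in $G$, so its preimage has codimension $3$ in $\Pj(\cS)$.
\begin{itemize}
\item For $m=2$ this is of codimension larger than $2$, so it is not a component.
\item For $m=3$ it is a component of exactly the expected dimension, and it could contribute to the Chern class. However, it is a $\Pj^1$-bundle over its image, so its pushforward to $G$ vanishes for dimension reasons. Whatever multiplicity it carries is therefore harmless.
\end{itemize}

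Third, I would show that the main component is generically reduced and maps birationally onto the orbit closure. Birationality holds because a generic point of type \typetwoone, respectively \typethree, has a unique double, respectively triple, root. Reducedness I would check at a single explicit point, using the normal forms $H_2(0)\oplus K_1$ and $H_3(0)$ from \cref{prop:orbits}. The computation is to deform the pencil inside a chart of $G$ and verify that the jet conditions have linearly independent differentials. For $m=2$ it is enough to see that the discriminant of $\operatorname{pf}_L$ vanishes to first order along a one-parameter deformation.

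I expect this transversality and multiplicity-one check to be the main obstacle. In particular, for $m=3$ the check must confirm that the rank-two triple-root type \typetwopart\ (of codimension $6$) does not create an extra component. Its dimension count shows that it does not, since it has codimension greater than $2$ in $G$ even after adding the one-dimensional marking.
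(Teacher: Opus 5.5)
Your proposal follows the paper's proof: the same jet class $\prod_{i=0}^{m-1}\bigl(3h+i(-2h+a)\bigr)$ on $\Pj(\cS)$, the same pushforwards giving $6a=6\sigma_1$ and $6a^2+3b=6\sigma_2+9\sigma_{1,1}$, and the same birationality argument via uniqueness of the marked multiple root. Your treatment of excess loci and generic reducedness is extra care that the paper leaves implicit.

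There is one slip. Since $\pi$ is a $\Pj^1$-bundle, the preimage of Beauville's Fano scheme has codimension $4$, not $3$, in $\Pj(\cS)$. So for $m=3$ it is not a component of the marked locus at all, because every component of that zero scheme has codimension at most $3$. Your pushforward-vanishing argument is still valid, but it is not needed.
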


\begin{proof}
For a marked double root,
\[
\pi_*\bigl(3h(h+a)\bigr)=3a+3a=6a.
\]
For a marked triple root,
\[
\begin{aligned}
\pi_*\bigl(3h(h+a)(-h+2a)\bigr)
&=\pi_*(-3h^3+3ah^2+6a^2h)\\
&=6a^2+3b\\
&=6\sigma_2+9\sigma_{1,1}.
\end{aligned}
\]
A general form in each locus has a unique marked multiple root, so the marked incidence map is generically one-to-one.
\end{proof}

\subsection{Identically singular pencils}

Let
\[
\cF=\{L\in G:\operatorname{pf}_L\equiv0\}.
\]
This is the Fano scheme of lines contained in the Pfaffian cubic.  Beauville proved that for alternating forms on a six-dimensional space it is irreducible of the expected codimension four \cite{Bea05}.  Hence it is the zero scheme of a regular section of $\Sym^3\cS^\vee$.

\begin{proposition}\label{prop:pfzero}
The closure of the type $M_1\oplus M_1$ is $\cF$, and
\[
[\cF]=18\sigma_{3,1}+27\sigma_{2,2}.
\]
\end{proposition}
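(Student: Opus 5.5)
Two things must be shown: that $\cF$ is the closure of the $M_1\oplus M_1$ orbit, and that its class is $c_4(\Sym^3\cS^\vee)$ written in the Schubert basis.

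\emph{Identification of the closure.} By \cref{prop:orbits}, the pencils with $\operatorname{pf}_L\equiv0$ are exactly the singular types:
\[
M_1\oplus M_1,\quad M_2\oplus M_0,\quad M_1\oplus M_0\oplus H_1,\quad M_0^2\oplus H_1\oplus H_1,\quad M_0^2\oplus H_2,\quad M_1\oplus M_0^3.
\]
Regular pencils have a nonzero Pfaffian, since the block sizes sum to six. So $\cF$ is the union of these six orbits. Among them, $M_1\oplus M_1$ is the unique orbit of codimension four; the others have codimension at least five by the table in \cref{thm:main}. Beauville's theorem \cite{Bea05} says $\cF$ is irreducible of codimension four. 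Hence the open orbit $M_1\oplus M_1$ is dense in $\cF$, and $\overline{\cO}_{M_1\oplus M_1}=\cF$ as sets. Since $\cF$ is the zero scheme of a regular section of the rank-four bundle $\Sym^3\cS^\vee$, it is Cohen--Macaulay. To conclude it is reduced, I will check that it is generically reduced: the differential of $\operatorname{pf}$ at the point $L=M_1\oplus M_1$ should be surjective onto $\Sym^3 L^\vee$. Equivalently, the orbit map to $\cF$ should have tangent image of dimension $\dim G-4$, which matches the orbit dimension. Then
\[
[\cF]=c_4(\Sym^3\cS^\vee).
\]

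\emph{The Chern class computation.} Let $x,y$ be the Chern roots of $\cS^\vee$. The roots of $\Sym^3\cS^\vee$ are $3x$, $2x+y$, $x+2y$, $3y$, so
\[
c_4=9xy\,(2x+y)(x+2y)=9xy\,(2x^2+5xy+2y^2)=9b(2a^2+b),
\]
using $a=x+y$ and $b=xy$. In Schubert classes, $a^2=\sigma_2+\sigma_{1,1}$, $b=\sigma_{1,1}$, and $\sigma_{1,1}\sigma_{1,1}=\sigma_{2,2}$. By Pieri, $\sigma_{1,1}\sigma_2=\sigma_{3,1}$ and $\sigma_{1,1}\sigma_{1,1}=\sigma_{2,2}$, so
\[
ba^2=\sigma_{3,1}+\sigma_{2,2},\qquad b^2=\sigma_{2,2}.
\]
Therefore
\[
c_4=18(\sigma_{3,1}+\sigma_{2,2})+9\sigma_{2,2}=18\sigma_{3,1}+27\sigma_{2,2}.
\]

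\emph{Main obstacle.} The computation is mechanical. The delicate step is scheme-theoretic reducedness, that is, multiplicity one. I would verify it by the explicit tangent computation at the canonical form $M_1\oplus M_1$, or alternatively by citing Beauville's description of the Fano scheme as smooth on its dense orbit.
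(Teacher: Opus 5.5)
Your proposal is correct and follows the paper's proof: Beauville's theorem gives $[\cF]=c_4(\Sym^3\cS^\vee)=9xy(2x+y)(x+2y)=18\sigma_{3,1}+27\sigma_{2,2}$, and the dense orbit is $M_1\oplus M_1$ by the same codimension count (the paper phrases it as: every other singular type has a nonzero common kernel, so it lies in $C_1$, which has codimension five).

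Your attention to generic reducedness fills a step the paper leaves implicit, and your criterion (surjectivity of $d\operatorname{pf}_L$ onto $\Sym^3L^\vee$ at $M_1\oplus M_1$) does hold by a direct check. However, your ``equivalently'' reformulation is not equivalent: the orbit's tangent space always has the orbit's dimension, whereas what you need is that the Zariski tangent space of $\cF$ at $L$ has dimension no larger than $\dim G-4$.
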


\begin{proof}
The Chern roots of $\Sym^3\cS^\vee$ are
\[
3x,\quad 2x+y,\quad x+2y,\quad 3y.
\]
Thus,
\[
\begin{aligned}
c_4(\Sym^3\cS^\vee)
&=9xy(2x+y)(x+2y)\\
&=18a^2b+9b^2\\
&=18\sigma_{3,1}+27\sigma_{2,2}.
\end{aligned}
\]
The common-kernel locus has codimension five by \cref{prop:common-kernel}; hence the general point of the irreducible four-codimensional locus $\cF$ has zero common kernel and therefore type $M_1\oplus M_1$.
\end{proof}

\section{Pencils containing a rank-two form}

The rank-two locus in $\Pj(W)$ is the Pl\"ucker Grassmannian
\[
X=\Gr(2,V^\vee)\subset\Pj(\Lambda^2V^\vee)=\Pj^{14}.
\]
It has codimension six and degree fourteen.  Let $\cA$ denote its tautological rank-two bundle, and let
\[
\cR=\Lambda^2\cA\subset W\otimes\cO_X
\]
be the tautological line of decomposable forms.  Put
\[
Z=\Pj\bigl((W\otimes\cO_X)/\cR\bigr).
\]
A point of $Z$ is a rank-two form $[q]\in X$ together with a direction $[r]$ modulo $q$, and therefore determines the line $\Pj\Span(q,r)$.  We obtain a proper map
\[
f:Z\longrightarrow G.
\]
It is birational over the locus of lines meeting $X$ in one reduced point.

\begin{proposition}\label{prop:ranktwo}
The closure of the type \typeonetwo is the locus of lines meeting $X$, and
\[
[\overline{\cO_{\typeonetwo}}]=14\sigma_5.
\]
\end{proposition}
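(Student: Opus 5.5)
We compute $[\overline{\cO_{\typeonetwo}}]$ as the pushforward $f_*[Z]$ and then identify its image with the orbit closure. The identification comes first. The image $f(Z)$ is irreducible, because $Z$ is a projective bundle over the irreducible $X$. Its dimension is $\dim Z=8+13=21$, which is codimension $5$ in $G$ (recall $\dim G=2\cdot 13=26$). Every pencil of type \typeonetwo contains a rank-two member, namely $H_1(0)^2$ evaluated at $\lambda=0$ has rank two. So $\cO_{\typeonetwo}\subset f(Z)$. By \cref{prop:orbits} this orbit has codimension $5$, so its closure is an irreducible closed subset of $f(Z)$ of the same dimension, and therefore equals $f(Z)$. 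It remains to check that $f$ has degree one onto its image. For this it suffices to exhibit one point of $\cO_{\typeonetwo}$ whose line meets $X$ in exactly one reduced point. In the representative $H_1(0)^2\oplus K_1$ the pencil members $A-\lambda B$ have rank $2$ only at $\lambda=0$. For reducedness, one checks that the line is transverse to $X$ there, i.e.\ $B\notin T_{[A]}\widehat X$. Here $T_{[A]}\widehat X=\{\omega:\omega|_{\ker A}=0\}$, and $B$ restricted to the four-dimensional $\ker A$ is nondegenerate (it carries the $H_1(0)^{\oplus 2}$ part of $B$), so transversality holds. Hence $[\overline{\cO_{\typeonetwo}}]=f_*[Z]$.

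To compute $f_*[Z]$, we use the standard incidence description. Let $I=\{(x,L): x\in X,\ x\in L\}\subset X\times G$; this is exactly $Z$. Equivalently, $Z\subset\Pj(\cS)$ is the preimage of $X$ under the evaluation map $e:\Pj(\cS)\to\Pj(W)$. The map $e$ is a smooth fibration, being a Grassmannian bundle over $\Pj(W)$, so $[Z]=e^*[X]$ in $\CH^6(\Pj(\cS))$. Since $[X]=14H^6$ with $H$ the hyperplane class on $\Pj^{14}$, and $e^*H=h$, we get $[Z]=14h^6$. Therefore
\[
f_*[Z]=14\,\pi_*(h^6)=14\,s_5(\cS^\vee)=14\sigma_5,
\]
using \eqref{eq:proj-gysin} with $N=2$ and $j=5$. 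Here $\pi_*(h^{1+j})$ is the $j$-th complete symmetric polynomial in the Chern roots $x,y$, which agrees with the values $\pi_*(h^2)=a$ and $\pi_*(h^3)=a^2-b$ already used. By the Schubert convention of the paper, $s_5(x,y)=\sigma_5$.

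The main obstacle is the birationality and transversality check, since the pushforward itself is formal. An alternative to the transversality argument is to note that the generic line through a point of $X$ meets $X$ only there, and transversally, because the lines in $T_{[q]}X$ form a proper closed subset of the fibre $\Pj^{13}$. Either argument gives degree one. A further consistency check is that pairing with the dual class $\sigma_{13,8}$ recovers the degree $14$.
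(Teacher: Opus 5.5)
Your proof is correct and follows the paper's argument: $Z=\mathrm{ev}^{-1}(X)\subset\Pj(\cS)$ has class $\mathrm{ev}^*[X]=14h^6$, which pushes forward to $14s_5(\cS^\vee)=14\sigma_5$. Your transversality check at $H_1(0)^2\oplus K_1$, using $T_{[A]}\widehat X=\{\omega:\omega|_{\ker A}=0\}$, spells out the birationality and generic-type claims that the paper asserts in a single line.
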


\begin{proof}
Let $\rho:\Pj(\cS)\to G$ be the universal line and let $\mathrm{ev}:\Pj(\cS)\to\Pj(W)$ be evaluation.  Since
\[
[X]=14H^6\in\CH^6(\Pj(W)),
\]
the incidence class is
\[
\rho_*\mathrm{ev}^*[X]=14\rho_*(h^6)=14s_5(\cS^\vee)=14\sigma_5.
\]
A general line meeting $X$ is not contained in the Pfaffian cubic, and has canonical type \typeonetwo.
\end{proof}

At a decomposable form $q$, let $K=\ker q$, of dimension four.  The Pfaffian expansion along the line $q+tr$ begins
\[
\Pf(q+tr)=t^2\,q\wedge r\wedge r+t^3\,r\wedge r\wedge r,
\]
up to a fixed nonzero scalar. Hence, the root at $q$ is triple precisely when the restriction $r|_K$ is degenerate.  On $Z$, if $\cD\subset ((W\otimes\cO_X)/\cR)$ is the tautological direction line, this condition is the zero divisor of
\[
(\cD^\vee)^2\otimes\det(K^\vee).
\]

\begin{proposition}\label{prop:ranktwo-triple}
The closure of the type \typetwopart has class
\[
[\overline{\cO_{\typetwopart}}]=14\sigma_6+28\sigma_{5,1}.
\]
\end{proposition}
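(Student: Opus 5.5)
The plan is to compute the class as the pushforward of the divisor class $2c_1(\cD^\vee)+c_1(K^\vee)$ on $Z$ under $f$, and to do all the bookkeeping on the universal line $\rho:\Pj(\cS)\to G$ used in the proof of \cref{prop:ranktwo}. The incidence $I=\mathrm{ev}^{-1}(X)\subset\Pj(\cS)$ consists of pairs $(L,[q])$ with $[q]\in L\cap X$. It is identified with $Z$ over the open set where $L\not\subset X$, via $(L,[q])\mapsto([q],[L/q])$. Under this identification $f$ corresponds to $\rho|_I$, and $\rho_*[I]=\rho_*\mathrm{ev}^*[X]=14\sigma_5$. The divisor cutting out the triple-root condition in the paragraph preceding the statement is then a divisor on $I$. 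I will express its class through $h$ and $a$, so that the projection formula reduces everything to Gysin pushforwards of powers of $h$.

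First I identify the two line bundles on $I$. The direction line is $\cD=L/q=\cS/\cO_{\Pj(\cS)}(-1)$, so $c_1(\cD^\vee)=c_1(\cS^\vee)-h=a-h$. For $K$, write $q\in\Lambda^2\cA$ with $\cA\subset V^\vee$ of rank two. Then $K=\ker q=\cA^\perp$, so $K^\vee=V^\vee/\cA$ and $c_1(K^\vee)=-c_1(\cA)$. Since $\cR=\Lambda^2\cA$ is the restriction of $\cO_{\Pj(W)}(-1)$, we get $c_1(\cA)=-H$, and $H$ pulls back to $h$ on $I$. Hence the divisor class is
\[
2(a-h)+h=2a-h .
\]

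Next I apply the projection formula. Because $h=\mathrm{ev}^*H$ and $a$ is pulled back from $G$,
\[
\rho_*\bigl((2a-h)\cdot\mathrm{ev}^*[X]\bigr)=2a\cdot\rho_*(14h^6)-\rho_*(14h^7)=28\,\sigma_1\sigma_5-14\sigma_6 .
\]
Here I use $\rho_*(h^{1+j})=s_j(\cS^\vee)$ from \eqref{eq:proj-gysin}. Pieri gives $\sigma_1\sigma_5=\sigma_6+\sigma_{5,1}$, so the result is $14\sigma_6+28\sigma_{5,1}$, as claimed.

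The main obstacle is justifying that this pushforward is exactly the fundamental class of $\overline{\cO_{\typetwopart}}$ with multiplicity one. This needs three checks.
\begin{itemize}
\item The section of $(\cD^\vee)^2\otimes\det K^\vee$, namely $r\mapsto (r|_K)^{\wedge 2}$, vanishes transversally at a general point of its zero locus. I would verify this in the canonical form $H_2(0)\oplus H_1(0)$, where $r|_K$ is a degenerate but nonzero form of rank two on the four-dimensional space $K$. The Pfaffian of $r|_K$ is a quadric in $r$, whose zero locus is reduced at points of rank exactly two.
\item The zero locus is irreducible of codimension one in $Z$. For each $q$ it is the projective bundle over a quadric cone in the fiber, and that cone is irreducible.
\item The map is generically injective and avoids the excess locus where $L\subset X$ or $L\subset\Pf(V)$. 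A general pencil of type \typetwopart meets $X$ in exactly one point, since its Pfaffian has a single triple root at a rank-two form. That point is the only rank-two member, and such a pencil is not identically singular.
\end{itemize}
Hence $f$ restricted to the divisor is birational onto the closure, and the degree is one.
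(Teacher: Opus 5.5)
Your argument is correct, and it reaches the answer by a different and shorter route than the paper.

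\textbf{The paper's route.} It stays on $Z=\Pj_X((W\otimes\cO_X)/\cR)$ and writes the triple-root divisor as $2\xi+r$. It then extracts each Schubert coefficient separately:
\begin{itemize}
\item pair with each of $\sigma_{13,7},\sigma_{12,8},\sigma_{11,9},\sigma_{10,10}$;
\item push $s_{u,v}(r,\xi)$ down to $X$ using the Segre classes $1,-r,0,\dots$ of $\cF^\vee$;
\item finish with $\int_X r^8=14$.
\end{itemize}

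\textbf{Your route.} You regard $Z$ as $I=\mathrm{ev}^{-1}(X)$ inside the universal line $\Pj(\cS)$. This identification is in fact a global isomorphism, with inverse $([q],[\bar r])\mapsto(\Span(q,r),[q])$, so your restriction to $L\not\subset X$ is unnecessary. You then note that $(\cD^\vee)^2\otimes\det K^\vee$ is the restriction of a line bundle on $\Pj(\cS)$ with first Chern class $2(a-h)+h=2a-h$. Flatness of $\mathrm{ev}$, a $\Pj^{13}$-bundle, gives $[I]=14h^6$. The projection formula then yields $[\overline{\cO_{\typetwopart}}]=2\sigma_1\cdot 14\sigma_5-14\sigma_6$ in one line, with no dual-basis bookkeeping. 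It also expresses the answer directly through the class $14\sigma_5$ of \cref{prop:ranktwo}.

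\textbf{What each approach buys.} Your shortcut depends on $X$ sitting in a fixed projective space with $[X]=14H^6$. In the relative setting of \cref{prop:common-ranktwo}, the rank-two locus is the relative Pl\"ucker Grassmannian in $\Pj(\Lambda^2\cE^\vee)$ over $\Pj(V)$. Its class there is no longer a pure power of the hyperplane class: it picks up terms in $c_i(\cE)$. The paper's pairing-plus-Gysin method carries over to that case uniformly, while yours would need that extra class as input.

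Your multiplicity-one checks are sound and more explicit than the paper's one-sentence justification:
\begin{itemize}
\item each fiber of the zero locus is a cone over the smooth Klein quadric, hence reduced away from $r|_K=0$;
\item the zero locus is irreducible over $X$;
\item a pencil of type $H_2(0)\oplus H_1(0)$ has a unique rank-two member.
\end{itemize}
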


\begin{proof}
Let
\[
r=c_1(\cR^\vee),\qquad \xi=c_1(\cD^\vee).
\]
If $K\subset V\otimes\cO_X$ is the rank-four common kernel of the tautological rank-two form, then the exact sequence
\[
0\longrightarrow K\longrightarrow V\otimes\cO_X\longrightarrow\cA^\vee\longrightarrow0
\]
shows, up to the constant line $\det V^\vee$, that
\[
c_1(\det K^\vee)=c_1(\det\cA^\vee)=r.
\]
Consequently, the triple-root divisor on $Z$ has class $2\xi+r$.

Put
\[
\cF=(W\otimes\cO_X)/\cR,
\]
which has rank fourteen.  The pullback of the ambient tautological bundle has a filtration with line quotients $\cR$ and $\cD$, so
\[
f^*\sigma_{p,q}=s_{p,q}(r,\xi).
\]
Moreover, dualizing $0\to\cR\to W\otimes\cO_X\to\cF\to0$ gives
\[
0\longrightarrow\cF^\vee\longrightarrow W^\vee\otimes\cO_X
\longrightarrow\cR^\vee\longrightarrow0.
\]
Hence, the complete symmetric classes of $\cF^\vee$ satisfy
\[
s_0(\cF^\vee)=1,\qquad s_1(\cF^\vee)=-r,
\qquad s_j(\cF^\vee)=0\quad(j\geq2).
\]
By \eqref{eq:proj-gysin},
\[
p_*(\xi^{13})=1,\qquad p_*(\xi^{14})=-r,
\qquad p_*(\xi^{13+j})=0\quad(j\geq2),
\]
where $p:Z\to X$.

For the coefficient of $\sigma_6$ we pair with $\sigma_{13,7}$.  Since
\[
s_{13,7}(r,\xi)=\sum_{j=0}^{6}r^{13-j}\xi^{7+j},
\]
only the terms with $\xi$-exponent at least thirteen survive.  A direct application of the preceding pushforwards gives
\[
\begin{aligned}
p_*\bigl((2\xi+r)s_{13,7}(r,\xi)\bigr)
&=2r^8+2r^7(-r)+r^8\\
&=r^8.
\end{aligned}
\]
The class $r$ is the Pl\"ucker hyperplane class on $X=\Gr(2,6)$, and
\[
\int_Xr^8=\deg\Gr(2,6)=14.
\]
Thus the coefficient of $\sigma_6$ is fourteen.

For the coefficient of $\sigma_{5,1}$ we pair with $\sigma_{12,8}$.  Here
\[
s_{12,8}(r,\xi)=\sum_{j=0}^{4}r^{12-j}\xi^{8+j},
\]
and only the final term, after multiplication by $2\xi$, reaches exponent thirteen.  Hence
\[
p_*\bigl((2\xi+r)s_{12,8}(r,\xi)\bigr)=2r^8,
\]
whose integral is twenty-eight.  For the remaining dual classes $\sigma_{11,9}$ and $\sigma_{10,10}$, the largest possible $\xi$-exponent after multiplication by $2\xi+r$ is at most twelve, so their pairings vanish.  This proves
\[
f_*[2\xi+r]=14\sigma_6+28\sigma_{5,1}.
\]
The generic pencil in this irreducible divisor has a triple Pfaffian root at a unique rank-two member, and therefore has type $H_2(0)\oplus H_1(0)$.
\end{proof}

\section{Common-kernel loci}

For $1\leq k\leq3$, let
\[
C_k=\{L\in G:\dim\ker L\geq k\}.
\]
Let $B_k=\Gr(k,V)$, with tautological bundle $\cK_k$ and quotient bundle $\cE_k=V/\cK_k$ of rank $6-k$.  Define
\[
Y_k=\Gr_{B_k}\bigl(2,\Lambda^2\cE_k^\vee\bigr).
\]
The universal two-plane of alternating forms on $\cE_k$ pulls back to a pencil on $V$ vanishing on $\cK_k$, and yields a proper map
\[
f_k:Y_k\longrightarrow G.
\]

\begin{lemma}\label{lem:common-birational}
The map $f_k$ is birational onto $C_k$.  In particular,
\[
\codim_G C_1=5,\qquad \codim_G C_2=10,\qquad \codim_G C_3=15.
\]
\end{lemma}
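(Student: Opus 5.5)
The plan is to show three things: $f_k$ lands in $C_k$ and is surjective onto it; $f_k$ is injective on a dense open subset; and $\dim Y_k$ gives the codimensions. Since $Y_k$ is a smooth irreducible Grassmann bundle over $B_k$ and $f_k$ is proper, birationality onto $C_k$ then follows (in characteristic zero, generic injectivity of a proper map between varieties gives birationality onto its image).

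\emph{Image.} A point of $Y_k$ is a $k$-plane $U\subset V$ together with a two-plane $L'\subset\Lambda^2(V/U)^\vee$. Pullback along $V\to V/U$ is injective, so $L'$ maps to a two-plane $L\subset W$ all of whose members vanish on $U$. Thus $U\subset\ker L$ and $f_k(Y_k)\subset C_k$. Conversely, if $\dim\ker L\ge k$, choose any $k$-plane $U\subset\ker L$. Every $\omega\in L$ descends to $V/U$, so $L$ lies in the image of $f_k$. Hence $f_k(Y_k)=C_k$.

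\emph{Generic injectivity.} The fiber of $f_k$ over $L$ is $\Gr(k,\ker L)$, so it is a single point exactly when $\dim\ker L=k$. It therefore suffices to exhibit one point of $Y_k$ whose image has kernel of dimension exactly $k$, since that condition is open on the irreducible $Y_k$. For $k=1,2,3$ the canonical pencils $M_2\oplus M_0$, $M_0^2\oplus H_1(0)\oplus K_1$ and $M_1\oplus M_0^3$ from \cref{prop:orbits} serve. A direct check shows they have common kernels of dimensions $1,2,3$: the blocks $M_m$ with $m\ge1$ have trivial common kernel, and the regular blocks are nondegenerate at a general point of the pencil. For $k=3$ one also uses that $V/U$ is three-dimensional, so a generic two-plane of forms on it has no common kernel. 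Concretely, two forms on a three-dimensional space have one-dimensional kernels, and for a general pair these kernels differ.

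\emph{Dimension count.} We have
\[
\dim Y_k=k(6-k)+2\Bigl(\tbinom{6-k}{2}-2\Bigr),
\]
which gives $5+2\cdot8=21$, $8+2\cdot4=16$ and $9+2\cdot1=11$ for $k=1,2,3$. Since $\dim G=2\cdot13=26$, the codimensions are $5$, $10$ and $15$, as claimed.

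The only step needing care is generic injectivity, and more precisely that the map is birational and not merely generically finite of degree one set-theoretically. In characteristic zero this is automatic. Alternatively, one can observe that $\ker L$ is recovered as a point of $\Gr(k,V)$ on the open set where $\dim\ker L=k$, which gives an explicit inverse morphism there.
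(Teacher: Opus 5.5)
Your proof is correct and follows the same route as the paper: on the dense open set where $\dim\ker L=k$ the kernel is recovered from the pencil, and the codimensions come from the same count $\dim Y_k=21,16,11$ against $\dim G=26$. You also supply points the paper leaves implicit, namely that $f_k$ surjects onto $C_k$, that the fiber over $L$ is $\Gr(k,\ker L)$, and canonical pencils showing that the locus with kernel dimension exactly $k$ is nonempty.
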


\begin{proof}
A general pencil in $C_k$ has common kernel exactly $k$-dimensional, so the kernel is recovered uniquely from the pencil.  The dimensions are
\[
\dim Y_k=k(6-k)+2\left(\binom{6-k}{2}-2\right),
\]
which equal $21,16,11$ for $k=1,2,3$, while $\dim G=26$.
\end{proof}

The generic types in $C_1,C_2,C_3$ are respectively
\[
M_2\oplus M_0,\qquad
M_0^2\oplus H_1(0)\oplus K_1,
\qquad
M_1\oplus M_0^3.
\]

\begin{proposition}\label{prop:common-kernel}
The common-kernel orbit closures have classes
\[
[C_1]=12\sigma_{4,1}+30\sigma_{3,2},
\]
\[
[C_2]=3\sigma_{9,1}+15\sigma_{8,2}+40\sigma_{7,3}+70\sigma_{6,4}+91\sigma_{5,5},
\]
\[
[C_3]=56\sigma_{9,6}+70\sigma_{8,7}.
\]
\end{proposition}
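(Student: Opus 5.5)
By \cref{lem:common-birational} we have $[C_k]=f_{k*}[Y_k]$, so it suffices to compute this pushforward. Its coefficient on $\sigma_{p,q}$ is
\[
\int_{Y_k} f_k^*\sigma_{13-q,13-p},
\]
obtained by pairing with the Poincar\'e dual class. The codimension of $C_k$ is $c=5,10,15$, so only partitions $(p,q)$ of $c$ can occur.

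The first step is to identify $f_k^*\sigma_{u,v}$. Let $\cT\subset\Lambda^2\cE_k^\vee$ be the tautological rank-two bundle on $Y_k$. The pencil $f_k(y)$ is the subspace $\cT_y\subset\Lambda^2\cE_k^\vee\subset\Lambda^2V^\vee=W$, which means $f_k^*\cS=\cT$. Hence
\[
f_k^*\sigma_{u,v}=s_{u,v}(\cT^\vee).
\]
Next we push forward along $p:Y_k\to B_k$ using \cref{lem:schur-gysin}, with $\cF=\Lambda^2\cE_k^\vee$ of rank $N=\binom{6-k}{2}$, so $N-2=8,4,1$ for $k=1,2,3$. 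This gives
\[
\int_{Y_k}s_{13-q,13-p}(\cT^\vee)=\int_{B_k}s_{15-q-N,\,15-p-N}(\Lambda^2\cE_k),
\]
and the integral vanishes when $13-p<N-2$. For $k=1$ the condition is $p\le 5$, which leaves $(4,1)$ and $(3,2)$ and already rules out $\sigma_5$. For $k=2$ it is $p\le 9$, excluding $\sigma_{10}$. For $k=3$ it is $p\le 12$, but the constraint that $s_{\mu}(\Lambda^2\cE_3)$ has degree $9=\dim B_3$ with $\Lambda^2\cE_3$ of rank $3$ restricts $\mu$ to at most three rows. Here $\mu$ has two rows, $\mu=(14-q-\,\cdot\,)$, and many coefficients will vanish in the computation below.

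The remaining step is to evaluate the Chern numbers $\int_{\Gr(k,6)}s_\mu(\Lambda^2\cE_k)$ with $|\mu|=k(6-k)$. We do this with \eqref{eq:grass-coeff}: expand $s_\mu$ by the Jacobi--Trudi identity \eqref{eq:JT-two-row} in terms of $h_m$ of the alphabet $\{x_i+x_j\}$, generated by \eqref{eq:complete-generating}, multiply by the Vandermonde, and extract the coefficient of $x_1^5x_2^4\cdots$.

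For $k=1$ the base is $\Pj^5$, so these are degree-five Schur classes of $\Lambda^2$ of the rank-five quotient. For example, the coefficient of $\sigma_{4,1}$ is $\int s_{4,1}(\Lambda^2\cE_1)$ and should come out to $12$. A useful sanity check is total degree: pairing $[C_k]$ with $\sigma_1^{26-c}$ should reproduce the degree of $C_k$ computed independently.

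For $k=3$ there is a simplification. Since $\Lambda^2\cE_3\cong\cE_3^\vee\otimes\det\cE_3$ has rank three, the two relevant classes $s_{6,3}$ and $s_{5,4}$ (dual to $\sigma_{9,6},\sigma_{8,7}$ after the shift by $N-2=1$) reduce to integrals on $\Gr(3,6)$ that can also be cross-checked by the Littlewood–Richardson rule.

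The main obstacle is the $k=2$ case. It has five coefficients, each a degree-eight Schur class of the rank-six bundle $\Lambda^2\cE_2$ on $\Gr(2,6)$, and computing them by brute coefficient extraction in four variables is heavy and error-prone. I would organize the computation by the symmetric reduction to the $\Gr(4,6)$ quotient roots. Failing that, I would expand $h_m(\Lambda^2\cE)$ in Schur functions of $\cE$ via known plethysm for small $m$, and then integrate using the Littlewood–Richardson duality on $\Gr(2,6)$. I would check the result against the total degree and against the identity $[C_2]\cdot\sigma_{1,1}$-type consistency with $[C_3]$ where applicable.
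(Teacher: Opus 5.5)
Your strategy is the same as the paper's: $f_k^*\cS=\cT_k$, then the Schur Gysin lemma turns the coefficient of $\sigma_{p,q}$ into $\int_{B_k}s_{15-N_k-q,\,15-N_k-p}(\Lambda^2\cE_k)$, then coefficient extraction on $\Gr(k,6)$. However, one step is wrong, and as written the proposal never produces the numbers that make up the statement.

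\textbf{The error is in the $k=1$ case.} The Gysin vanishing criterion $13-p<N_1-2=8$ means $p>5$, so it excludes none of $(5,0),(4,1),(3,2)$. For $\sigma_5$ one has $13-p=8$ exactly, so its coefficient is $\int_{\Pj^5}s_5(\Lambda^2\cE_1)$, and this has to be computed; the paper's table lists it as $0$. The computation is quick. From $0\to\cK_1\otimes\cE_1\to\Lambda^2V\to\Lambda^2\cE_1\to0$ you get $\sum_m s_m(\Lambda^2\cE_1)t^m=(1+ht)^6/(1+2ht)$, so $s_1,\dots,s_5$ equal $4h,7h^2,6h^3,3h^4,0$. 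Hence $s_5=0$, $s_{4,1}=s_4s_1-s_5=12h^5$ and $s_{3,2}=s_3s_2-s_4s_1=30h^5$. A geometric argument also works: every line in $C_1$ lies in the Pfaffian cubic, while the dual class $\sigma_{13,8}$ forces the line through a general point, which has rank six and so is not on the cubic.

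\textbf{The $k=3$ case is not yet justified.} Your remark that $\mu$ has at most three rows is vacuous, since every $\mu$ here has two rows. The vanishing of the coefficients of $\sigma_{12,3},\sigma_{11,4},\sigma_{10,5}$ is not automatic: these survive the Gysin step with $\mu=(9),(8,1),(7,2)$. You can show they vanish in either of two ways:
\begin{itemize}
\item evaluate them by coefficient extraction, as in the paper's table;
\item observe that each pencil in $C_3$ is a line on the eight-dimensional Pl\"ucker Grassmannian $X\subset\Pj^{14}$. Such a line misses a general $\Pj^q$ for $q\le5$, whereas $\sigma_{13-q,13-p}$ requires meeting such a $\Pj^q$.
\end{itemize}

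\textbf{What remains.} You still have to actually evaluate the remaining Chern numbers, not just describe how. This includes the two nonzero values $56,70$ for $k=3$ and the five values for $k=2$. With these repairs and computations, your argument is the paper's proof.
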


\begin{proof}
Let
\[
p_k:Y_k=\Gr_{B_k}(2,\cF_k)\longrightarrow B_k,
\qquad
\cF_k=\Lambda^2\cE_k^\vee,
\]
and let $\cT_k$ be the tautological pencil.  Put
\[
N_k=\rk\cF_k=\binom{6-k}{2}.
\]
Because $f_k^*\cS=\cT_k$, the coefficient of $\sigma_{a,b}$ in $[C_k]$ is
\[
\int_{Y_k}s_{13-b,13-a}(\cT_k^\vee).
\]
Applying \cref{lem:schur-gysin} gives the uniform formula
\begin{equation}\label{eq:common-coeff}
[\sigma_{a,b}]\,[C_k]
=
\int_{B_k}
 s_{15-N_k-b,\,15-N_k-a}(\Lambda^2\cE_k),
\end{equation}
with the convention that a negative part gives zero.

We now evaluate the right-hand side by \eqref{eq:grass-coeff}.  For the sake of clarity, the complete calculation is displayed below.  In each row, $\mu$ is the partition appearing on the right of \eqref{eq:common-coeff}; the entry is
\[
[x_1^5x_2^4\cdots x_{6-k}^{k}]
\left(
 s_\mu(\{x_i+x_j\}_{i<j})
 \prod_{i<j}(x_i-x_j)
\right).
\]
Using \eqref{eq:complete-generating} and \eqref{eq:JT-two-row} yields
\[
\begin{array}{c|c|rrrrrr}
 k&N_k&\multicolumn{6}{c}{\mu\ \text{: its integral}}\\
\hline
1&10&(5):0&(4,1):12&(3,2):30&&&\\
2&6 &(8):3&(7,1):15&(6,2):40&(5,3):70&(4,4):91&\\
3&3 &(9):0&(8,1):0&(7,2):0&(6,3):56&(5,4):70.
\end{array}
\]
For example, when $k=1$ the base is $\Pj^5$, and the same expansion reads
\[
s_5(\Lambda^2\cE_1)=0,
\quad
s_{4,1}(\Lambda^2\cE_1)=12h^5,
\quad
s_{3,2}(\Lambda^2\cE_1)=30h^5,
\]
where $h=c_1(\cO_{\Pj^5}(1))$.

For $k=1$, the candidate codimension-five basis elements are
$\sigma_5,\sigma_{4,1},\sigma_{3,2}$, and \eqref{eq:common-coeff} associates to them $(5),(4,1),(3,2)$.  This gives
\[
[C_1]=12\sigma_{4,1}+30\sigma_{3,2}.
\]
For $k=2$, the class $\sigma_{10}$ corresponds to a negative second part and therefore has coefficient zero; the remaining five entries are the second row of the table.  For $k=3$, the possible classes are
\[
\sigma_{13,2},\sigma_{12,3},\sigma_{11,4},
\sigma_{10,5},\sigma_{9,6},\sigma_{8,7}.
\]
The first is excluded by a negative part in \eqref{eq:common-coeff}, and the next three correspond to $(9),(8,1),(7,2)$ and vanish.  The final two give $56$ and $70$.  This proves all three formulas.
\end{proof}

\subsection{A rank-two member after quotienting by the common kernel}

Let $B=\Pj(V)$, with tautological line $\cK$ and quotient $\cE=V/\cK$ of rank five.  Over $B$ form
\[
g:H=\Gr_B(2,\cE^\vee)\longrightarrow B,
\]
with tautological rank-two bundle $\cA$.  The line
\[
\cR=\Lambda^2\cA\subset\Lambda^2\cE^\vee
\]
is a decomposable rank-two alternating form on the quotient.  Put
\[
\cF=(\Lambda^2\cE^\vee)/\cR,
\qquad
p:Z=\Pj_H(\cF)\longrightarrow H.
\]
A point of $Z$ consists of a common kernel line, a rank-two member of the quotient pencil, and a second direction modulo that member.  Hence there is a proper map
\[
f:Z\longrightarrow G.
\]
The source is smooth of dimension
\[
5+\dim\Gr(2,5)+8=5+6+8=19.
\]
A general pencil in the image has common kernel exactly one-dimensional and a unique rank-two member after quotienting; these data recover the point of $Z$.  Thus $f$ is birational onto the closure of the type $M_1\oplus M_0\oplus H_1$.

\begin{proposition}\label{prop:common-ranktwo}
The closure of the type $M_1\oplus M_0\oplus H_1$ has class
\[
42\sigma_{6,1}+84\sigma_{5,2}.
\]
\end{proposition}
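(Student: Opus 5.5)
Since $f\colon Z\to G$ is birational onto the closure of $M_1\oplus M_0\oplus H_1$, the class is $f_*[Z]$. We read off each coefficient by pairing with dual Schubert classes:
\[
[\sigma_{a,b}]\,[\overline{\cO}]=\int_Z f^*\sigma_{13-b,13-a}.
\]
The codimension is $26-19=7$, so the candidate classes are $\sigma_7,\sigma_{6,1},\sigma_{5,2},\sigma_{4,3}$. These are dual to $\sigma_{13,6},\sigma_{12,7},\sigma_{11,8},\sigma_{10,9}$. As in the proof of \cref{prop:ranktwo-triple}, the pullback $f^*\cS$ has a filtration with line quotients $\cR$ and $\cD$. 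Hence, with $r=c_1(\cR^\vee)$ and $\xi=c_1(\cD^\vee)$,
\[
f^*\sigma_{p,q}=s_{p,q}(r,\xi)=(r\xi)^q\sum_{j=0}^{p-q}r^{p-q-j}\xi^j .
\]

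First push forward along $p\colon Z=\Pj_H(\cF)\to H$, where $\cF$ has rank $9$. The dual sequence $0\to\cF^\vee\to\Lambda^2\cE\to\cR^\vee\to0$ gives
\[
s(\cF^\vee)=s(\Lambda^2\cE)\,(1-r).
\]
Then \eqref{eq:proj-gysin} yields
\[
p_*(\xi^{8+j})=s_j(\Lambda^2\cE)-r\,s_{j-1}(\Lambda^2\cE).
\]
Unlike in \cref{prop:ranktwo-triple}, these terms no longer truncate, because $\Lambda^2\cE$ is not trivial. So each pairing becomes a class on $H=\Gr_B(2,\cE^\vee)$ that is polynomial in $r$ and in the $h_j(\Lambda^2\cE)$. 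The degrees are $19-8=11$, matching $\dim H=11$.

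Next, integrate over $H$. Here $r=c_1(\Lambda^2\cA^\vee)=c_1(\cA^\vee)$, and everything else is pulled back from $B=\Pj^5$. I would push forward along $g\colon H\to B$, using \cref{lem:schur-gysin} with $N=5$: write the powers $r^m$ in Schur classes of $\cA^\vee$, and only those containing the rectangle $(3,3)$ survive. This leaves an expression in Schur classes of $\cE$ and the $h_j(\Lambda^2\cE)$ on $\Pj^5$. That expression is then evaluated by \eqref{eq:grass-coeff} with $k=1$, or simply in $h=c_1(\cO_{\Pj^5}(1))$, since $c(\cE)=(1-h)^{-1}$ up to the sign conventions. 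Alternatively, I could integrate over the partial flag variety $\operatorname{Fl}(1,4;V)$ directly, using the coefficient-extraction formula in the roots of $\cE$.

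The main obstacle is the bookkeeping in this double pushforward: the expansion of $r^m=s_1(\cA^\vee)^m$ into two-row Schur classes (Catalan/ballot numbers), combined with the $h_j(\Lambda^2\cE)$ factors. As consistency checks, I expect the pairings with $\sigma_{13,6}$ and $\sigma_{10,9}$ to vanish, and the other two to give $42$ and $84$. The $\sigma_{13,6}$ pairing in particular should vanish for a geometric reason: its vanishing reflects that the image lies in $C_1$, whose class has no $\sigma_5$ term. A further sanity check is to compare against $[C_1]$ restricted by the quotient-rank-two condition, i.e. the analogue of $14\sigma_5$ with $\Gr(2,5)$ of degree $5$ in place of degree $14$.
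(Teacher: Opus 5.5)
Your proposal follows the paper's proof essentially step for step. You pair with $\sigma_{13,6},\sigma_{12,7},\sigma_{11,8},\sigma_{10,9}$, push forward along $p$ using $s_m(\cF^\vee)=s_m(\Lambda^2\cE)-r\,s_{m-1}(\Lambda^2\cE)$, then push along $g$ via the ballot-number expansion of $r^m$ and the Schur Gysin lemma with $N=5$, and finally integrate on $\Pj^5$.

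The bookkeeping you leave open does close as you predict. Write $s_j(\Lambda^2\cE)=c_jh^j$, where $(c_0,\dots,c_5)=(1,4,7,6,3,0)$ is read off from $(1+ht)^6/(1+2ht)$. Combined with $g_*r^6=5$, $g_*r^7=14h$, $g_*r^8=14h^2$ and $g_*r^m=0$ for $m\geq9$, the four pairings come out to $0$, $42$, $84$, $0$.
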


\begin{proof}
Set
\[
r=c_1(\cR^\vee),\qquad \xi=c_1(\cD^\vee),
\]
where $\cD\subset p^*\cF$ is the tautological direction line.  Then $f^*\cS^\vee$ has Chern roots $r,\xi$, so $f^*\sigma_{u,v}=s_{u,v}(r,\xi)$.

The bundle $\cF$ has rank nine.  From
\[
0\longrightarrow\cF^\vee\longrightarrow\Lambda^2\cE
\longrightarrow\cR^\vee\longrightarrow0
\]
we obtain, for $m\geq0$,
\begin{equation}\label{eq:F-complete}
s_m(\cF^\vee)
=s_m(\Lambda^2\cE)-r\,s_{m-1}(\Lambda^2\cE),
\end{equation}
where $s_{-1}=0$.  Expanding the two-row Schur polynomial and using \eqref{eq:proj-gysin} gives
\begin{equation}\label{eq:ranktwo-projective-push}
p_*s_{u,v}(r,\xi)
=
\sum_{\substack{0\leq j\leq u-v\\v+j\geq8}}
 r^{u-j}
 \left(
 s_{v+j-8}(\Lambda^2\cE)
-rs_{v+j-9}(\Lambda^2\cE)
 \right).
\end{equation}

It remains to push powers of $r$ through $g$.  If $z_1,z_2$ are the Chern roots of $\cA^\vee$, then $r=z_1+z_2$, and the elementary Schur expansion
\begin{equation}\label{eq:power-schur}
(z_1+z_2)^m
=
\sum_{b=0}^{\lfloor m/2\rfloor}
\left(\binom mb-\binom m{b-1}\right)
 s_{m-b,b}(z_1,z_2)
\end{equation}
(with $\binom m{-1}=0$), followed by \cref{lem:schur-gysin} for the rank-five bundle $\cE^\vee$, yields
\begin{equation}\label{eq:r-power-push}
g_*(r^m)
=
\sum_{b=3}^{\lfloor m/2\rfloor}
\left(\binom mb-\binom m{b-1}\right)
 s_{m-b-3,b-3}(\cE).
\end{equation}

Substituting \eqref{eq:F-complete}--\eqref{eq:r-power-push} into \eqref{eq:ranktwo-projective-push}, and finally using \eqref{eq:grass-coeff} on $B=\Pj^5$, gives
\[
\begin{array}{c|c|c}
(a,b)&(u,v)=(13-b,13-a)&g_*p_*s_{u,v}(r,\xi)\\
\hline
(7,0)&(13,6)&0\\
(6,1)&(12,7)&42h^5\\
(5,2)&(11,8)&84h^5\\
(4,3)&(10,9)&0,
\end{array}
\]
where $h=c_1(\cO_{\Pj^5}(1))$.  Each row is a finite application of the displayed formulas; for instance, all complete symmetric classes needed in \eqref{eq:ranktwo-projective-push} have degree at most five and are obtained from \eqref{eq:complete-generating}.  Since $\int_{\Pj^5}h^5=1$, the only nonzero Poincar\'e pairings are $42$ and $84$, dual to $\sigma_{6,1}$ and $\sigma_{5,2}$.
\end{proof}

\subsection{A repeated quotient Pfaffian}

On $Y_2$, the quotient space $\cE_2$ has rank four, so the Pfaffian of the universal quotient pencil is a binary quadratic with values in $\det\cE_2^\vee$.  Its discriminant is a section of
\[
(\det\cT^\vee)^2\otimes(\det\cE_2^\vee)^2,
\]
where $\cT$ is the tautological rank-two bundle on $Y_2$.  Its zero divisor maps birationally to the closure of $M_0^2\oplus H_2$.

\begin{proposition}\label{prop:common-double}
The closure of the type $M_0^2\oplus H_2$ has class
\[
6\sigma_{10,1}+24\sigma_{9,2}+50\sigma_{8,3}+60\sigma_{7,4}+42\sigma_{6,5}.
\]
\end{proposition}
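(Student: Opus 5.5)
We compute $[\overline{\cO}]=f_{2*}[D]$, where $D\subset Y_2$ is the discriminant divisor just described and $f_2:Y_2\to G$ is the map of \cref{lem:common-birational}. By the birationality asserted above, this pushforward is exactly the class of the orbit closure. On $Y_2=\Gr_{B_2}(2,\cF_2)$ with $B_2=\Gr(2,V)$ and $\cF_2=\Lambda^2\cE_2^\vee$ of rank $N_2=6$, set $a=c_1(\cT^\vee)$ and $e=c_1(\cE_2)$. The divisor class is then
\[
[D]=2c_1(\det\cT^\vee)+2c_1(\det\cE_2^\vee)=2a-2e.
\]
Here the sign of $e$ must be fixed by matching $\det\cE_2^\vee$ with the convention $\cF_2=\Lambda^2\cE_2^\vee$. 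The discriminant is homogeneous of degree two in the quotient Pfaffian, which takes values in $\det\cE_2^\vee$; this accounts for the factor $2$ on $c_1(\det\cE_2^\vee)$.

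The expected codimension is $11$, so the candidate basis elements are $\sigma_{11},\sigma_{10,1},\dots,\sigma_{6,5}$. The coefficient of $\sigma_{a',b'}$ is
\[
\int_{Y_2}(2a-2e)\,s_{13-b',13-a'}(\cT^\vee).
\]
To evaluate it, expand $a\cdot s_{u,v}=s_{u+1,v}+s_{u,v+1}$ by Pieri, dropping the second term when $u=v$. Then apply \cref{lem:schur-gysin} with $N=6$: this shifts each partition down by $(4,4)$ and kills anything whose second part is below $4$. The result is
\[
p_*\bigl((2a-2e)s_{u,v}(\cT^\vee)\bigr)=2s_{u-3,v-4}+2s_{u-4,v-3}-2e\,s_{u-4,v-4},
\]
with the Schur classes in $\Lambda^2\cE_2$ and terms of invalid shape omitted. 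For $(u,v)=(13-b',13-a')$ these classes have degree $8$ on $B_2=\Gr(2,6)$.

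Each integral is then computed by \eqref{eq:grass-coeff} with $k=2$, $r=4$, extracting the coefficient of $x_1^5x_2^4x_3^3x_4^2$. The integrands $\int s_\mu(\Lambda^2\cE_2)$ for $|\mu|=8$ are already tabulated in \cref{prop:common-kernel}: $(8):3$, $(7,1):15$, $(6,2):40$, $(5,3):70$, $(4,4):91$. The new ingredients are the integrals $\int e\cdot s_\nu(\Lambda^2\cE_2)$ for $|\nu|=7$. These are computed by the same method, using $e=x_1+x_2+x_3+x_4$ together with \eqref{eq:complete-generating} and \eqref{eq:JT-two-row}.

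As a consistency check, the $\sigma_{11}$ coefficient pairs with $s_{13,2}$. This term only contributes through $s_{u-4,v-3}$ with $v-3=-1$, so it vanishes; this is consistent with the statement. The remaining five rows should reproduce $6,24,50,60,42$. For example, the $\sigma_{10,1}$ coefficient is
\[
2\int s_{9,-1}+2\int s_{8,0}-2\int e\,s_{8,-1}=2\cdot 3=6,
\]
where every term with a negative part vanishes. This matches the claimed $6$, which also confirms that the divisor class is set up correctly.

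The main obstacle is twofold. First, the sign and normalization of the $e$-term in $[D]$ must be fixed correctly. Second, one must justify that $D$ is reduced and that the generic point of $D$ has type $M_0^2\oplus H_2$, with common kernel exactly two-dimensional. I would check the latter on the quotient: a generic $4\times4$ pencil with a double Pfaffian root is $H_2$ rather than $H_1\oplus H_1$ with a rank-zero member, and the discriminant vanishes to order one there. The $e$-integrals themselves are finite coefficient extractions of the same kind as in \cref{prop:common-kernel}.
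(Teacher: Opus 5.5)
Your proposal is correct and is essentially the paper's proof: your divisor class $2a-2e$ with $e=c_1(\cE_2)$ is the paper's $2u+2e$ with $e=c_1(\det\cE_2^\vee)$, and the Pieri step, the Schur Gysin shift by $(4,4)$, and the reduction to Chern numbers of $\Lambda^2\cE_2$ on $\Gr(2,6)$ are the same. One caution: the $\sigma_{10,1}$ row cannot confirm the sign of the $e$-term, because that term vanishes there; the sign matters only in the last four rows, which need $\int c_1(\cE_2)\,s_\nu(\Lambda^2\cE_2)=6,30,80,140$ for $\nu=(7),(6,1),(5,2),(4,3)$ (the paper's $-6,-30,-80,-140$), and these give $36-12$, $110-60$, $220-160$, $322-280$, as claimed.
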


\begin{proof}
Write
\[
u=c_1(\cT^\vee),\qquad e=c_1(\det\cE_2^\vee),
\]
so that the divisor class is $2u+2e$.  Let
\[
p:Y_2=\Gr_{B_2}(2,\Lambda^2\cE_2^\vee)\longrightarrow B_2=\Gr(2,V).
\]
The bundle $\Lambda^2\cE_2^\vee$ has rank six.  Put
\[
A_\mu=s_\mu(\Lambda^2\cE_2).
\]
By \cref{lem:schur-gysin},
\[
p_*s_{r,s}(\cT^\vee)=A_{r-4,s-4}
\]
when $s\geq4$, and the pushforward vanishes otherwise.  Pieri's rule gives
\[
u\,s_{r,s}(\cT^\vee)
=s_{r+1,s}(\cT^\vee)+s_{r,s+1}(\cT^\vee),
\]
with the second term omitted when $r=s$.

The Chern numbers needed on $B_2$ are, by \eqref{eq:grass-coeff},
\[
\begin{array}{c|rrrrr}
\mu&(8)&(7,1)&(6,2)&(5,3)&(4,4)\\
\hline
\int A_\mu&3&15&40&70&91,
\end{array}
\]
and
\[
\begin{array}{c|rrrr}
\mu&(7)&(6,1)&(5,2)&(4,3)\\
\hline
\int eA_\mu&-6&-30&-80&-140.
\end{array}
\]
For example, these numbers are the coefficients prescribed by \eqref{eq:grass-coeff} after evaluating the Schur polynomials in the six roots $x_i+x_j$ of $\Lambda^2\cE_2$; no other characteristic numbers enter.

We now pair with the dual basis.  The calculation, including the separate contributions of $2u$ and $2e$, is
\[
\begin{array}{c|c|r|r|r}
(a,b)&(r,s)=(13-b,13-a)&\int 2u\,s_{r,s}&\int2e\,s_{r,s}&\text{total}\\
\hline
(11,0)&(13,2)&0&0&0\\
(10,1)&(12,3)&6&0&6\\
(9,2)&(11,4)&36&-12&24\\
(8,3)&(10,5)&110&-60&50\\
(7,4)&(9,6)&220&-160&60\\
(6,5)&(8,7)&322&-280&42.
\end{array}
\]
To illustrate one row, for $(r,s)=(10,5)$ we have
\[
p_*(u s_{10,5})=A_{7,1}+A_{6,2},
\qquad
p_*s_{10,5}=A_{6,1},
\]
so the pairing is
\[
2(15+40)+2(-30)=50.
\]
The other rows follow identically.  This proves the asserted class.
\end{proof}

\bigskip

\noindent\textsc{Department of Mathematics, Harvard University, 1 Oxford Street, Cambridge, MA 02138, USA}\par
\noindent\textit{Email address:} \texttt{akrishna@college.harvard.edu}
\end{document}